\newtheorem{theorem}{Theorem}
\newtheorem{corollary}[theorem]{Corollary}
\newtheorem{example}[theorem]{Example}
\newtheorem{remark}[theorem]{Remark}
\newenvironment{proof}[1][Proof]{\noindent\textbf{#1.} }{\ \rule{0.5em}{0.5em}}
\numberwithin{theorem}{section}
\begin{document}

\title{On the integral of products of higher-order Bernoulli and Euler polynomials}
\author{M. Cihat Da\u{g}l\i\ and M\"{u}m\"{u}n Can\\
Department of Mathematics, Akdeniz University, Antalya, TR-07058, Turkey\\
\textbf{E-mails:} mcihatdagli@akdeniz.edu.tr, mcan@akdeniz.edu.tr}
\date{}
\maketitle

\begin{abstract}In this paper, we derive a formula on the
integral of products of the higher-order Euler polynomials. By the same way,
similar relations are obtained for $l$ higher-order Bernoulli polynomials and
$r$ higher-order Euler polynomials. Moreover, we establish the connection
between the results and the generalized Dedekind sums and Hardy--Berndt sums.
Finally, the Laplace transform of Euler polynomials is given.

\textbf{Keywords:} Bernoulli polynomials and numbers,
Dedekind sums, Integrals, Recurrence relations.

\textbf{2010 Mathematics Subject Classification:} 11B68, 11F20.
\end{abstract}
\section{Introduction}

The classical Bernoulli polynomials $B_{m}(x)$ and Euler polynomials
$E_{m}(x)$ are usually defined by means of the following generating functions:%
\begin{equation}
\dfrac{ue^{uz}}{e^{u}-1}=%
{\displaystyle\sum\limits_{m=0}^{\infty}}
B_{m}(z)\dfrac{u^{m}}{m!}\text{ }\left(  \left\vert u\right\vert <2\pi\right)
\text{ and }\dfrac{2e^{uz}}{e^{u}+1}=%
{\displaystyle\sum\limits_{m=0}^{\infty}}
E_{m}(z)\dfrac{u^{m}}{m!}\text{ }\left(  \left\vert u\right\vert <2\pi\right)
. \label{0}%
\end{equation}
In particular, the rational numbers $B_{m}=B_{m}(0)$ and integers $E_{m}%
=2^{m}E_{m}(1/2)$ are called classical Bernoulli numbers and Euler numbers, respectively.

As is well known, the classical Bernoulli and Euler polynomials play important
roles in different areas of mathematics such as number theory, combinatorics,
special functions and analysis.

This paper is primarily concerned with the higher-order Bernoulli and Euler
polynomials. We derive a formula for the integral having $r$ higher-order
Euler polynomials and also for $l$ higher-order Bernoulli and $r$ higher-order
Euler polynomials. The result is the corresponding generalization of some
formulae discovered by Agoh and Dilcher \cite{ad}, Hu et al \cite{hkk} and of
course \cite{c,em,mi,mo,n,w}. From our formula, we establish the connection
between the sums of products of Euler (and Bernoulli and Euler) polynomials
and the reciprocity formula for generalized Dedekind (and Hardy--Berndt) sums,
motivated by Da\u{g}l\i\ and Can \cite{cm}.

We now turn to the higher-order Bernoulli and higher-order Euler polynomials.
The higher-order Bernoulli polynomials $B_{m}^{(\alpha)}(x)$ and higher-order
Euler polynomials $E_{m}^{(\alpha)}(x)$, each of degree $m$ in $x$ and in
$\alpha$, are defined by means of the generating functions \cite{n}
\[
\left(  \dfrac{u}{e^{u}-1}\right)  ^{\alpha}e^{uz}=%
{\displaystyle\sum\limits_{m=0}^{\infty}}
B_{m}^{(\alpha)}(z)\dfrac{u^{m}}{m!}\text{ and }\left(  \dfrac{2}{e^{u}%
+1}\right)  ^{\alpha}e^{uz}=%
{\displaystyle\sum\limits_{m=0}^{\infty}}
E_{m}^{(\alpha)}(z)\dfrac{u^{m}}{m!},
\]
respectively. For $\alpha=1$, we have $B_{m}^{(1)}(z)=B_{m}(z)$ and
$E_{m}^{(1)}(z)=E_{m}(z)$. They possess the differential property%
\begin{equation}
\dfrac{d}{dz}B_{m}^{(\alpha)}(z)=mB_{m-1}^{(\alpha)}(z),\text{ }\dfrac{d}%
{dz}E_{m}^{(\alpha)}(z)=mE_{m-1}^{(\alpha)}(z) \label{1}%
\end{equation}
and reciprocal relations%
\begin{equation}
B_{m}^{(\alpha)}(\alpha-z)=\left(  -1\right)  ^{m}B_{m}^{(\alpha)}(z),\text{
}E_{m}^{(\alpha)}(\alpha-z)=\left(  -1\right)  ^{m}E_{m}^{(\alpha)}(z)
\label{2}%
\end{equation}
which imply $B_{m}^{(\alpha)}(\alpha/2)=0$ and $E_{m}^{(\alpha)}(\alpha/2)=0$
for odd $m.$

Also, we need the following expression of the Euler polynomials in terms of
Bernoulli polynomials%
\begin{equation}
E_{n}(x)=\frac{2}{n+1}\left\{  B_{n+1}(x)-2^{n+1}B_{n+1}\left(  x/2\right)
\right\}  \label{49}%
\end{equation}
for $n\geq0.$

We summarize this study as follows: we firstly obtain several convolution
formulas for higher-order Bernoulli and Euler polynomials applying the
generating function methods, motivated by \cite{cz}. We also derive a formula
for the integral having higher-order Euler polynomials. By this, we extend the
result of Hu et al \cite{hkk} and Liu et al \cite{lpz}. By the same way,
similar relations are obtained for $l$ higher-order Bernoulli polynomials and
$r$ higher-order Euler polynomials, as well\textbf{.} Moreover, we establish
the connection between the results and the reciprocity formulas for
generalized Dedekind sums $T_{r}(c,d)$ and generalized Hardy-Berndt sums
$s_{3,r}(c,d)$ and $s_{4,r}(c,d)$.

\section{Convolutions of higher-order Bernoulli and Euler polynomials}

In this section, we obtain some convolutions involving higher-order Bernoulli
and Euler polynomials we will use in the next section.

Differentiating the generating function of higher-order Euler polynomials as follows%

\begin{align*}
\frac{d}{du}\left(  \left(  \frac{2}{e^{u}+1}\right)  ^{n}e^{uz}\right)   &
=\frac{2^{n}ze^{uz}}{\left(  e^{u}+1\right)  ^{n}}-\frac{n2^{n}e^{u(z+1)}%
}{\left(  e^{u}+1\right)  ^{n+1}}\\
&  =2^{n}\frac{d}{du}\frac{e^{uz}}{\left(  e^{u}+1\right)  ^{n}},
\end{align*}
we have%
\[
\frac{n2^{n+1}e^{u(z+1)}}{\left(  e^{u}+1\right)  ^{n+1}}=\frac{2^{n+1}%
ze^{uz}}{\left(  e^{u}+1\right)  ^{n}}-2^{n+1}\frac{d}{du}\frac{e^{uz}%
}{\left(  e^{u}+1\right)  ^{n}}.
\]
Taking $z=x+y-1$ and $n=\beta+\gamma-1$ leads%
\begin{align*}
\frac{n2^{n+1}e^{u(z+1)}}{\left(  e^{u}+1\right)  ^{n+1}}  &  =\left(
{\displaystyle\sum\limits_{m=0}^{\infty}}
E_{m}^{(\beta)}(x)\frac{u^{m}}{m!}\right)  \left(
{\displaystyle\sum\limits_{k=0}^{\infty}}
E_{k}^{(\gamma)}(y)\frac{u^{k}}{k!}\right) \\
&  =%
{\displaystyle\sum\limits_{m=0}^{\infty}}
{\displaystyle\sum\limits_{k=0}^{m}}
\binom{m}{k}E_{k}^{(\beta)}(x)E_{m-k}^{(\gamma)}(y)\frac{u^{m}}{m!},
\end{align*}%
\[
\frac{2^{n+1}ze^{uz}}{\left(  e^{u}+1\right)  ^{n}}=2\left(  x+y-1\right)
{\displaystyle\sum\limits_{m=0}^{\infty}}
E_{m}^{(\beta+\gamma-1)}(x+y-1)\frac{u^{m}}{m!}%
\]
and%
\[
2^{n+1}\frac{d}{du}\frac{e^{uz}}{\left(  e^{u}+1\right)  ^{n}}=2%
{\displaystyle\sum\limits_{m=0}^{\infty}}
E_{m+1}^{(\beta+\gamma-1)}(x+y-1)\frac{u^{m}}{m!}.
\]
By equating the coefficients of $\dfrac{u^{m}}{m!},$ we get the convolution
formula%
\begin{equation}%
{\displaystyle\sum\limits_{k=0}^{m}}
\binom{m}{k}E_{k}^{(\beta)}(x)E_{m-k}^{(\gamma)}(y)=2\left(  x+y-1\right)
E_{m}^{(\beta+\gamma-1)}(x+y-1)-2E_{m+1}^{(\beta+\gamma-1)}(x+y-1). \label{41}%
\end{equation}
Similarly, for higher-order Bernoulli polynomials, we obtain
\begin{align*}
&
{\displaystyle\sum\limits_{k=0}^{m}}
\binom{m}{k}B_{k}^{(\beta)}(x)B_{m-k}^{(\gamma)}(y)\\
&  =\left(  x+y-1\right)  mB_{m-1}^{(\beta+\gamma-1)}(x+y-1)+\left(
\gamma+\beta-1-m\right)  B_{m}^{(\beta+\gamma-1)}(x+y-1).
\end{align*}
From the generating functions of the higher-order Bernoulli and Euler
polynomials, we can write
\[
\left(  \frac{u}{e^{u}-1}\right)  ^{n}e^{xu}\left(  \frac{2}{e^{u}+1}\right)
^{n}e^{yu}=\left(  \frac{2u}{e^{2u}-1}\right)  ^{n}e^{u(x+y)}.
\]
Thus, similar arguments give the following convolution formula
\begin{equation}%
{\displaystyle\sum\limits_{k=0}^{m}}
\binom{m}{k}B_{m-k}^{\left(  n\right)  }(x)E_{k}^{\left(  n\right)  }%
(y)=2^{m}B_{m}^{(n)}\left(  \frac{x+y}{2}\right)  . \label{34}%
\end{equation}

\section{\textbf{Integral of products of higher-order Bernoulli and Euler
polynomials}}

This section is devoted to obtain the integral of products of $r$ higher-order
Euler polynomials. Also, we derive a formula for the integral of products of
$l$ higher-order Bernoulli polynomials and $r$ higher-order Euler polynomials.
Furthermore, we relate these results to the reciprocity formulas for
generalized Dedekind sums $T_{r}(c,d)$ and Hardy-Berndt sums $s_{3,r}(c,d)$
and $s_{4,r}(c,d)$.

\subsection{\textbf{\textit{Euler polynomials}}}

\begin{theorem}
\label{th-i}Let $b_{1},...,b_{r},$ $y_{1},...,y_{r}$ be arbitrary real numbers
with $b_{s}\not =0,$ $1\leq s\leq r,$ and
\begin{align*}
\widehat{I}_{n_{1},...,n_{r}}(x;b;y)  &  =\widehat{I}_{n_{1},...,n_{r}%
}(x;b_{1},...,b_{r};y_{1},...,y_{r})\\
&  =\frac{1}{n_{1}!\cdots n_{r}!}\int\limits_{0}^{x}\prod\limits_{s=1}%
^{r}E_{n_{s}}^{\left(  \alpha_{s}\right)  }\left(  b_{s}z+y_{s}\right)  dz,\\
\widehat{C}_{n_{1},...,n_{r}}(x;b;y)  &  =\widehat{C}_{n_{1},...,n_{r}}\left(
x;b_{1},...,b_{r};y_{1},...,y_{r}\right) \\
&  =\frac{1}{n_{1}!\cdots n_{r}!}\left(  \prod\limits_{s=1}^{r}E_{n_{s}%
}^{\left(  \alpha_{s}\right)  }\left(  b_{s}x+y_{s}\right)  -\prod
\limits_{s=1}^{r}E_{n_{s}}^{(\alpha_{s})}\left(  y_{s}\right)  \right)
\end{align*}
Then%
\begin{align*}
&  \widehat{I}_{n_{1},...,n_{r}}(x;b;y)\\
&  =\sum\limits_{a=0}^{\mu}\left(  -1\right)  ^{a}\sum\limits_{j_{1}%
+\cdots+j_{r-1}=a}\binom{a}{j_{1},...,j_{r-1}}b_{1}^{j_{1}}\cdots
b_{r-1}^{j_{r-1}}b_{r}^{-a-1}\widehat{C}_{n_{1}-j_{1},\ldots,n_{r-1}%
-j_{r-1},n_{r}+a+1}(x;b;y)\\
&  \quad+\frac{\left(  -1\right)  ^{\mu+1}}{\left(  n+\mu+1\right)  !}%
\int\limits_{0}^{x}\left(  \prod\limits_{s=1}^{r-1}E_{n_{s}}^{\left(
\alpha_{s}\right)  }\left(  b_{s}z+y_{s}\right)  \right)  ^{\left(
\mu+1\right)  }E_{n_{r}+\mu+1}^{(\alpha_{r})}\left(  b_{r}z+y_{r}\right)  dz,
\end{align*}
where $\binom{\mu}{n_{1},...,n_{r}}$ are the multinomial coefficients defined
by%
\[
\binom{\mu}{n_{1},...,n_{r}}=\frac{\mu!}{n_{1}!\cdots n_{r}!},\text{ }%
n_{1}+\cdots+n_{r}=\mu\text{ and }n_{1},...,n_{r}\geq0.
\]
In particular if $\mu=n_{1}+\cdots+n_{r-1},$ we have%
\begin{align}
\widehat{I}_{n_{1},...,n_{r}}(x;b;y)  &  =\sum\limits_{a=0}^{\mu}\left(
-1\right)  ^{a}\sum\limits_{j_{1}+\cdots+j_{r-1}=a}\binom{a}{j_{1}%
,...,j_{r-1}}b_{1}^{j_{1}}\cdots b_{r-1}^{j_{r-1}}\nonumber\\
&  \quad\times b_{r}^{-a-1}\widehat{C}_{n_{1}-j_{1},\ldots,n_{r-1}%
-j_{r-1},n_{r}+a+1}(x;b;y). \label{31}%
\end{align}

\end{theorem}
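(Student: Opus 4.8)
The plan is to prove the formula by repeated integration by parts, peeling off one factor of the antiderivative of $E_{n_r}^{(\alpha_r)}(b_rz+y_r)$ at each stage and differentiating the remaining product $\prod_{s=1}^{r-1}E_{n_s}^{(\alpha_s)}(b_sz+y_s)$. The key observation is that, by the differentiation rule \eqref{1} and the chain rule, $\frac{d}{dz}E_{n}^{(\alpha)}(b z+y)=nb\,E_{n-1}^{(\alpha)}(bz+y)$, while $\int_0^x E_{n_r}^{(\alpha_r)}(b_rz+y_r)\,dz=\frac{1}{(n_r+1)b_r}\left[E_{n_r+1}^{(\alpha_r)}(b_rx+y_r)-E_{n_r+1}^{(\alpha_r)}(y_r)\right]$; dividing by $n_r!$ and rescaling turns the boundary term into exactly $\widehat{C}_{\ldots}$-type expressions with the last index raised.

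First I would set $F(z)=\frac{1}{n_1!\cdots n_{r-1}!}\prod_{s=1}^{r-1}E_{n_s}^{(\alpha_s)}(b_sz+y_s)$ and $G(z)=\frac{1}{n_r!}E_{n_r}^{(\alpha_r)}(b_rz+y_r)$, so $\widehat{I}_{n_1,\dots,n_r}(x;b;y)=\int_0^x F(z)G(z)\,dz$. Integrating by parts $\mu+1$ times, each time integrating the $G$-factor (which raises its order and its index, producing a factor $1/b_r$) and differentiating $F$, I get
\[
\int_0^x F G\,dz=\sum_{a=0}^{\mu}(-1)^a\Big[F^{(a)}(z)\,G_{a+1}(z)\Big]_0^x+(-1)^{\mu+1}\int_0^x F^{(\mu+1)}(z)\,G_{\mu+1}(z)\,dz,
\]
where $G_{a+1}(z)$ denotes the $(a{+}1)$-fold antiderivative of $G$ that vanishes appropriately; concretely $G_{a+1}(z)=\frac{1}{(n_r+a+1)!}b_r^{-a-1}E_{n_r+a+1}^{(\alpha_r)}(b_rz+y_r)$ up to the constants of integration, which must be chosen so that the boundary contributions assemble into $\widehat{C}$. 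The $a$-th derivative of $F$ is computed by the multivariate Leibniz rule: $F^{(a)}(z)=\frac{1}{n_1!\cdots n_{r-1}!}\sum_{j_1+\cdots+j_{r-1}=a}\binom{a}{j_1,\dots,j_{r-1}}\prod_{s=1}^{r-1}\frac{d^{j_s}}{dz^{j_s}}E_{n_s}^{(\alpha_s)}(b_sz+y_s)$, and each inner derivative equals $b_s^{j_s}\frac{n_s!}{(n_s-j_s)!}E_{n_s-j_s}^{(\alpha_s)}(b_sz+y_s)$. Collecting the factorials, the $n_s!/(n_s-j_s)!$ terms combine with the $1/n_s!$ prefactors to give $1/(n_s-j_s)!$, which is exactly what the definition of $\widehat{C}_{n_1-j_1,\dots,n_{r-1}-j_{r-1},n_r+a+1}$ demands after also absorbing the $1/(n_r+a+1)!$.

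The main obstacle — really the only delicate point — is bookkeeping the constants of integration so that the boundary term at each stage is genuinely a $\widehat{C}$ (a difference of the product at $x$ and at $0$) rather than two separate pieces; one handles this by choosing the antiderivative $G_{a+1}$ consistently and checking that the telescoping in the integration-by-parts sum leaves precisely the stated combination. After that, the "in particular" case $\mu=n_1+\cdots+n_{r-1}$ follows immediately: then $F$ is a polynomial of degree exactly $n_1+\cdots+n_{r-1}=\mu$ in $z$, so $F^{(\mu+1)}\equiv 0$ and the final integral term vanishes, leaving \eqref{31}. I would also remark that for this vanishing one uses that $E_{n_s}^{(\alpha_s)}(b_sz+y_s)$ has degree $n_s$ in $z$, so the product has degree $\mu$, and differentiating $\mu+1$ times kills it.
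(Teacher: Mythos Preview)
Your proposal is correct and follows essentially the same route as the paper: set $f(z)=\prod_{s=1}^{r-1}E_{n_s}^{(\alpha_s)}(b_sz+y_s)$, integrate by parts $\mu+1$ times against $E_{n_r}^{(\alpha_r)}(b_rz+y_r)$ (each integration raising the last index by one and producing a factor $b_r^{-1}$), and then expand $f^{(a)}$ via the generalized Leibniz rule together with \eqref{1}. Your worry about constants of integration is unnecessary: since every integration by parts is over the definite interval $[0,x]$, the boundary term is automatically a difference at $x$ and $0$, so any additive constant in the chosen antiderivative cancels and the resulting expression is exactly $\widehat{C}_{n_1-j_1,\ldots,n_{r-1}-j_{r-1},n_r+a+1}(x;b;y)$ without further adjustment.
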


\begin{proof}
Let
\[
f(z)=E_{n_{1}}^{(\alpha_{1})}\left(  b_{1}z+y_{1}\right)  \cdots E_{n_{r-1}%
}^{(\alpha_{r-1})}\left(  b_{r-1}z+y_{r-1}\right)  .
\]
Then%
\begin{align*}
&  \frac{1}{n_{r}!}\int\limits_{0}^{x}f(z)E_{n_{r}}^{(\alpha_{r})}\left(
b_{r}z+y_{r}\right)  dz\\
&  =\left[  \frac{1}{b_{r}\left(  n_{r}+1\right)  !}f(z)E_{n_{r}+1}%
^{(\alpha_{r})}\left(  b_{r}z+y_{r}\right)  \right]  _{0}^{x}-\frac{1}{\left(
n_{r}+1\right)  !}\int\limits_{0}^{x}f^{\prime}(z)E_{n_{r+1}}^{(\alpha_{r}%
)}\left(  b_{r}z+y_{r}\right)  dz.
\end{align*}
Using $\mu$ additional integrations by parts, we find that%
\begin{align}
\frac{1}{n_{r}!}\int\limits_{0}^{x}f(z)E_{n_{r}}^{(\alpha_{r})}\left(
b_{r}z+y_{r}\right)  dz  &  =\sum\limits_{a=0}^{\mu}\frac{\left(  -1\right)
^{a}}{\left(  n_{r}+a+1\right)  !}\left[  f^{(a)}(z)E_{n_{r}+a+1}^{(\alpha
_{r})}\left(  b_{r}z+y_{r}\right)  \right]  _{0}^{x}\nonumber\\
&  +\frac{\left(  -1\right)  ^{\mu+1}}{\left(  n_{r}+\mu+1\right)  !}%
\int\limits_{0}^{x}f^{(\mu+1)}(z)E_{n_{r}+\mu+1}^{(\alpha_{r})}\left(
b_{r}z+y_{r}\right)  dz. \label{40}%
\end{align}
Using the property of derivative%
\[
\left(  f_{1}\left(  z\right)  \cdots f_{m}\left(  z\right)  \right)
^{\left(  a\right)  }=\sum\limits_{j_{1}+\cdots+j_{m}=a}\binom{a}%
{j_{1},...,j_{m}}f_{1}{}^{\left(  j_{1}\right)  }\left(  z\right)  \cdots
f_{m}^{\left(  j_{m}\right)  }\left(  z\right)  ,
\]
and (\ref{1}), we get the desired result.
\end{proof}

Setting $x=1$ and $b_{s}=\alpha_{s}-2y_{s}$ with $y_{s}\not =\alpha_{s}/2,$
$1\leq s\leq r,$ in (\ref{31}) we have%
\begin{align*}
&  \widehat{I}_{n_{1},...,n_{r}}\left(  1;\alpha_{1}-2y_{1},\ldots,\alpha
_{r}-2y_{r};y_{1},...,y_{r}\right) \\
&  =\sum\limits_{a=0}^{n_{1}+\cdots+n_{r-1}}\left(  -1\right)  ^{a}%
\sum\limits_{j_{1}+\cdots+j_{r-1}=a}\binom{a}{j_{1},...,j_{r-1}}\frac{\left(
\left(  -1\right)  ^{n_{1}+\cdots+n_{r}+1}-1\right)  }{\left(  n_{1}%
-j_{1}\right)  !\cdots\left(  n_{r}+a+1\right)  !}\\
&  \quad\times b_{1}^{j_{1}}\cdots b_{r-1}^{j_{r-1}}b_{r}^{-a-1}E_{n_{1}%
-j_{1}}^{\left(  \alpha_{1}\right)  }\left(  y_{1}\right)  \cdots
E_{n_{r-1}-j_{r-1}}^{\left(  \alpha_{r-1}\right)  }\left(  y_{r-1}\right)
E_{n_{r}+a+1}^{\left(  \alpha_{r}\right)  }\left(  y_{r}\right)
\end{align*}
since $E_{n_{s}-j_{s}}^{\left(  \alpha_{s}\right)  }\left(  b_{s}%
-y_{s}\right)  =E_{n_{s}-j_{s}}^{\left(  \alpha_{s}\right)  }\left(
\alpha_{s}-y_{s}\right)  =\left(  -1\right)  ^{n_{s}-j_{s}}E_{n_{s}-j_{s}%
}^{\left(  \alpha_{s}\right)  }\left(  y_{s}\right)  $ and $j_{1}%
+\cdots+j_{r-1}=a.$ Therefore, if $n_{1}+\cdots+n_{r}+1$ is even, then
\[
\widehat{I}_{n_{1},...,n_{r}}\left(  1;\alpha_{1}-2y_{1},\ldots,\alpha
_{r}-2y_{r};y_{1},...,y_{r}\right)  =0,
\]
and if $n_{1}+\cdots+n_{r}+1$ is odd, then
\begin{align*}
&  \widehat{I}_{n_{1},...,n_{r}}\left(  1;\alpha_{1}-2y_{1},\ldots,\alpha
_{r}-2y_{r};y_{1},...,y_{r}\right) \\
&  =-2\sum\limits_{a=0}^{n_{1}+\cdots+n_{r-1}}\left(  -1\right)  ^{a}%
\frac{\left(  \alpha_{r}-2y_{r}\right)  ^{-a-1}}{\left(  n_{r}+a+1\right)
!}E_{n_{r}+a+1}^{\left(  \alpha_{r}\right)  }\left(  y_{r}\right) \\
&  \quad\times\sum\limits_{j_{1}+\cdots+j_{r-1}=a}\binom{a}{j_{1},...,j_{r-1}%
}\prod\limits_{s=1}^{r-1}\frac{\left(  \alpha_{s}-2y_{s}\right)  ^{j_{s}}%
}{\left(  n_{s}-j_{s}\right)  !}E_{n_{s}-j_{s}}^{\left(  \alpha_{s}\right)
}\left(  y_{s}\right)  .
\end{align*}
For example, we have%
\[
\int\limits_{0}^{1}E_{2}^{(3)}\left(  7z-2\right)  E_{3}^{(1/2)}\left(
-\frac{3}{2}z+1\right)  E_{10}^{(5)}\left(  4z+1/2\right)  dz=0
\]
and%
\[
\frac{1}{2!10!}\int\limits_{0}^{1}E_{2}^{(3)}\left(  3z\right)  E_{10}%
^{(5)}\left(  -3z+4\right)  dz=\frac{2}{3}\sum\limits_{a=0}^{2}\frac
{E_{2-a}^{(3)}\left(  0\right)  }{\left(  2-a\right)  !}\frac{E_{11+a}%
^{(5)}\left(  4\right)  }{\left(  11+a\right)  !}.
\]

It is seen from the definition of the integral $\widehat{I}_{n_{1},...,n_{r}%
}(x;b;y)$ that the left-hand side of (\ref{31}) is invariant under
interchanging the order of the integrands. That is, for\textbf{\ }$r=2,$%
\begin{align}
&  \sum\limits_{a=0}^{n}\left(  -1\right)  ^{a}\binom{m+n+1}{n-a}b_{1}%
^{a}b_{2}^{-a-1}\nonumber\\
&  \quad\times\left(  E_{n-a}^{\left(  \gamma\right)  }\left(  b_{1}%
x+y_{1}\right)  E_{m+a+1}^{\left(  \beta\right)  }\left(  b_{2}x+y_{2}\right)
-E_{n-a}^{\left(  \gamma\right)  }\left(  y_{1}\right)  E_{m+a+1}^{\left(
\beta\right)  }\left(  y_{2}\right)  \right) \nonumber\\
&  =\sum\limits_{a=0}^{m}\left(  -1\right)  ^{a}\binom{m+n+1}{m-a}b_{2}%
^{a}b_{1}^{-a-1}\nonumber\\
&  \quad\times\left(  E_{m-a}^{\left(  \gamma\right)  }\left(  b_{2}%
x+y_{2}\right)  E_{n+a+1}^{\left(  \beta\right)  }\left(  b_{1}x+y_{1}\right)
-E_{m-a}^{\left(  \gamma\right)  }\left(  y_{2}\right)  E_{n+a+1}^{\left(
\beta\right)  }\left(  y_{1}\right)  \right)  . \label{25}%
\end{align}
So, we may investigate the reciprocity relation for sums of products of
higher-order Euler polynomials as follows: Let
\begin{align*}
T  &  :=\sum\limits_{a=0}^{n}\left(  -1\right)  ^{a}\binom{m+n+1}{n-a}%
b_{1}^{a}b_{2}^{-a-1}E_{n-a}^{\left(  \gamma\right)  }\left(  y_{1}\right)
E_{m+a+1}^{\left(  \beta\right)  }\left(  y_{2}\right) \\
&  -\sum\limits_{a=0}^{m}\left(  -1\right)  ^{a}\binom{m+n+1}{m-a}b_{2}%
^{a}b_{1}^{-a-1}E_{m-a}^{\left(  \gamma\right)  }\left(  y_{2}\right)
E_{n+a+1}^{\left(  \beta\right)  }\left(  y_{1}\right)  .
\end{align*}
We first rewrite this as%
\begin{align}
T  &  =\sum\limits_{a=0}^{n}\left(  -1\right)  ^{n-a}\binom{m+n+1}{a}%
b_{1}^{n-a}b_{2}^{a-n-1}E_{a}^{\left(  \gamma\right)  }\left(  y_{1}\right)
E_{m+n+1-a}^{\left(  \beta\right)  }\left(  y_{2}\right) \nonumber\\
&  \quad-\sum\limits_{a=0}^{m}\left(  -1\right)  ^{m-a}\binom{m+n+1}{a}%
b_{2}^{m-a}b_{1}^{a-m-1}E_{a}^{\left(  \gamma\right)  }\left(  y_{2}\right)
E_{m+n+1-a}^{\left(  \beta\right)  }\left(  y_{1}\right)  . \label{46}%
\end{align}
Without loss of generality we may assume that $n\geq m$; in this case we
separate the sum from $0$ to $m$ and $m+1$ to $n$ on the first summation in
(\ref{46}), and rewrite these as%
\begin{align*}
&  \sum\limits_{a=0}^{m}\left(  -1\right)  ^{n-a}\binom{m+n+1}{a}b_{1}%
^{n-a}b_{2}^{a-n-1}E_{a}^{\left(  \gamma\right)  }\left(  y_{1}\right)
E_{m+n+1-a}^{\left(  \beta\right)  }\left(  y_{2}\right) \\
&  =\sum\limits_{a=n+1}^{m+n+1}\left(  -1\right)  ^{m+1-a}\binom{m+n+1}%
{a}b_{1}^{a-m-1}b_{2}^{m-a}E_{m+n+1-a}^{\left(  \gamma\right)  }\left(
y_{1}\right)  E_{a}^{\left(  \beta\right)  }\left(  y_{2}\right)
\end{align*}
and
\begin{align*}
&  \sum\limits_{a=m+1}^{n}\left(  -1\right)  ^{n-a}\binom{m+n+1}{a}b_{1}%
^{n-a}b_{2}^{a-n-1}E_{a}^{\left(  \gamma\right)  }\left(  y_{1}\right)
E_{m+n+1-a}^{\left(  \beta\right)  }\left(  y_{2}\right) \\
&  =\sum\limits_{a=m+1}^{n}\left(  -1\right)  ^{m+1-a}\binom{m+n+1}{a}%
b_{1}^{a-m-1}b_{2}^{m-a}E_{m+n+1-a}^{\left(  \gamma\right)  }\left(
y_{1}\right)  E_{a}^{\left(  \beta\right)  }\left(  y_{2}\right)  .
\end{align*}
Thus, we have
\begin{align}
T  &  =\frac{1}{b_{1}^{m+1}b_{2}^{n+1}}\sum\limits_{a=0}^{m+n+1}\left(
-1\right)  ^{m+1-a}\binom{m+n+1}{a}\nonumber\\
&  \quad\times b_{1}^{a}b_{2}^{m+n+1-a}E_{m+n+1-a}^{\left(  \gamma\right)
}\left(  y_{1}\right)  E_{a}^{\left(  \beta\right)  }\left(  y_{2}\right)  .
\label{47}%
\end{align}
Combining (\ref{25}) and (\ref{47}) gives the reciprocity relation for sums of
products of higher-order Euler polynomials.

\begin{corollary}%
\begin{align}
&  \sum\limits_{a=0}^{n}\left(  -1\right)  ^{a}\binom{m+n+1}{n-a}b_{1}%
^{a}b_{2}^{-a-1}E_{n-a}^{\left(  \gamma\right)  }\left(  b_{1}x+y_{1}\right)
E_{m+a+1}^{\left(  \beta\right)  }\left(  b_{2}x+y_{2}\right) \nonumber\\
&  \quad-\sum\limits_{a=0}^{m}\left(  -1\right)  ^{a}\binom{m+n+1}{m-a}%
b_{2}^{a}b_{1}^{-a-1}E_{m-a}^{\left(  \gamma\right)  }\left(  b_{2}%
x+y_{2}\right)  E_{n+a+1}^{\left(  \beta\right)  }\left(  b_{1}x+y_{1}\right)
\nonumber\\
&  =\frac{1}{b_{1}^{m+1}b_{2}^{n+1}}\sum\limits_{a=0}^{m+n+1}\left(
-1\right)  ^{m+1-a}\binom{m+n+1}{a}b_{1}^{a}b_{2}^{m+n+1-a}E_{m+n+1-a}%
^{\left(  \gamma\right)  }\left(  y_{1}\right)  E_{a}^{\left(  \beta\right)
}\left(  y_{2}\right)  . \label{45}%
\end{align}
In particular for $y_{1}=\gamma/2,$ $y_{2}=\beta/2$ and even $\left(
m+n\right)  ,$ the right-hand side of (\ref{45}) vanishes.
\end{corollary}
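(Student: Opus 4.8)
The plan is to read off the Corollary from Theorem~\ref{th-i} specialised to $r=2$, so that most of the argument is already present in the paragraphs preceding~(\ref{45}); what remains is to assemble those pieces and to dispose of the special case. First I would use the fact that $\widehat{I}_{n,m}(x;b_1,b_2;y_1,y_2)$, being the integral of a product of two factors, is unchanged when those factors are interchanged, i.e. when $(n,\gamma,b_1,y_1)$ and $(m,\beta,b_2,y_2)$ are swapped. Writing out~(\ref{31}) once for the original order and once for the swapped order and equating gives identity~(\ref{25}). Rearranging~(\ref{25}) so that the terms carrying the shifted arguments $b_sx+y_s$ remain on the left while the boundary terms (the values at $y_s$) are gathered on the right shows that the left-hand side of~(\ref{45}) equals the quantity $T$ defined just before~(\ref{46}); so it suffices to evaluate $T$ in closed form.

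To evaluate $T$, I would first reverse the summation index in each of its two sums, $a\mapsto n-a$ in one and $a\mapsto m-a$ in the other, so that $T$ takes the form~(\ref{46}) in which both sums carry the single weight $\binom{m+n+1}{a}$ against the fixed total degree $m+n+1$. Assuming without loss of generality $n\ge m$, I would split the first sum of~(\ref{46}) at $a=m$: the substitution $a\mapsto m+n+1-a$ carries the block $0\le a\le m$ onto the range $n+1\le a\le m+n+1$, while the middle block $m+1\le a\le n$, whose index range is symmetric about $(m+n+1)/2$, is recast into the same shape by that substitution without leaving its range. The two pieces together fill $m+1\le a\le m+n+1$, and the (sign-adjusted) second sum of~(\ref{46}) supplies the remaining block $0\le a\le m$; assembling them gives the single sum~(\ref{47}) over $0\le a\le m+n+1$. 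Combining the rearranged~(\ref{25}) with~(\ref{47}) then yields~(\ref{45}).

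The delicate part is purely the bookkeeping in these reversals: one must correctly transform the sign $(-1)^a$, the power $b_1^ab_2^{-a-1}$, and the binomial coefficient at each step, keep track of the fact that swapping the two Euler factors also swaps the orders $\gamma$ and $\beta$ attached to them, and verify that the three ranges $[0,m]$, $[m+1,n]$, $[n+1,m+n+1]$ exactly tile $[0,m+n+1]$. The degenerate case $m=n$, where the middle block is empty, should be noted but presents no difficulty, and the rest is routine manipulation of binomial identities.

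For the ``in particular'' assertion I would substitute $y_1=\gamma/2$ and $y_2=\beta/2$ into the right-hand side of~(\ref{45}) and invoke the reciprocal relation~(\ref{2}): there $E_{m+n+1-a}^{(\gamma)}(\gamma/2)$ vanishes unless $m+n+1-a$ is even, and $E_a^{(\beta)}(\beta/2)$ vanishes unless $a$ is even, so any nonzero summand would force $m+n+1=(m+n+1-a)+a$ to be even, contradicting the hypothesis that $m+n$ is even. Hence every summand is zero and the right-hand side of~(\ref{45}) vanishes.
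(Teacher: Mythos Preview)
Your proposal is correct and follows essentially the same route as the paper: specialise Theorem~\ref{th-i} to $r=2$, use the symmetry of the integrand to obtain~(\ref{25}), isolate the boundary contribution $T$, re-index to reach~(\ref{46}), split and reflect via $a\mapsto m+n+1-a$ to assemble the single sum~(\ref{47}), and combine. Your explicit justification of the vanishing in the special case via~(\ref{2}) is exactly the intended parity argument (the paper leaves this implicit).
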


\begin{remark}
Beginning from the left-hand side of (\ref{45}) and using the arguments in the
proof of (\ref{47}), the right-hand side of (\ref{45}) turns into%
\begin{align*}
&  \frac{1}{b_{1}^{m+1}b_{2}^{n+1}}\sum\limits_{a=0}^{m+n+1}\left(  -1\right)
^{m+1-a}\binom{m+n+1}{a}\\
&  \times b_{1}^{a}b_{2}^{m+n+1-a}E_{m+n+1-a}^{\left(  \gamma\right)  }\left(
b_{1}x+y_{1}\right)  E_{a}^{\left(  \beta\right)  }\left(  b_{2}%
x+y_{2}\right)  .
\end{align*}
So it follows that for all $x$,%
\begin{align*}
&  \sum\limits_{a=0}^{m+n+1}\left(  -1\right)  ^{a}\binom{m+n+1}{a}b_{1}%
^{a}b_{2}^{m+n+1-a}E_{m+n+1-a}^{\left(  \gamma\right)  }\left(  b_{1}%
x+y_{1}\right)  E_{a}^{\left(  \beta\right)  }\left(  b_{2}x+y_{2}\right) \\
&  =\sum\limits_{a=0}^{m+n+1}\left(  -1\right)  ^{a}\binom{m+n+1}{a}b_{1}%
^{a}b_{2}^{m+n+1-a}E_{m+n+1-a}^{\left(  \gamma\right)  }\left(  y_{1}\right)
E_{a}^{\left(  \beta\right)  }\left(  y_{2}\right)  .
\end{align*}

\end{remark}

$\bullet$ Let $b_{1}=b_{2}=1$ in (\ref{45}). Then the right-hand side becomes,
with the use of (\ref{2}),%
\[
\left(  -1\right)  ^{n}T=\sum\limits_{a=0}^{m+n+1}\binom{m+n+1}{a}%
E_{m+n+1-a}^{\left(  \gamma\right)  }\left(  \gamma-y_{1}\right)
E_{a}^{\left(  \beta\right)  }\left(  y_{2}\right)  .
\]
Now using (\ref{41})\textbf{\ }by taking $x=y_{2}$ and $y=\gamma-y_{1},$
(\ref{45}) reduces to
\begin{align}
&  \sum\limits_{a=0}^{n}\left(  -1\right)  ^{a}\binom{m+n+1}{n-a}%
E_{n-a}^{\left(  \gamma\right)  }\left(  x+y_{1}\right)  E_{m+a+1}^{\left(
\beta\right)  }\left(  x+y_{2}\right) \nonumber\\
&  -\sum\limits_{a=0}^{m}\left(  -1\right)  ^{a}\binom{m+n+1}{m-a}%
E_{m-a}^{\left(  \gamma\right)  }\left(  x+y_{2}\right)  E_{n+a+1}^{\left(
\beta\right)  }\left(  x+y_{1}\right) \nonumber\\
&  =\left(  -1\right)  ^{n}2\left(  y_{2}-y_{1}+\gamma-1\right)
E_{m+n+1}^{\left(  \gamma+\beta-1\right)  }\left(  y_{2}-y_{1}+\gamma-1\right)
\nonumber\\
&  \quad-\left(  -1\right)  ^{n}2E_{m+n+2}^{\left(  \gamma+\beta-1\right)
}\left(  y_{2}-y_{1}+\gamma-1\right)  . \label{48}%
\end{align}

$\bullet$ Setting\textbf{\ }$b_{1}=1,$\ $b_{2}=-1$ and using (\ref{41}),
\textbf{(}\ref{45}\textbf{) }becomes\textbf{\ }%
\begin{align*}
&  \sum\limits_{a=0}^{n}\binom{m+n+1}{n-a}E_{n-a}^{\left(  \gamma\right)
}\left(  x+y_{1}\right)  E_{m+a+1}^{\left(  \beta\right)  }\left(
y_{2}-x\right) \\
&  +\sum\limits_{a=0}^{m}\binom{m+n+1}{m-a}E_{m-a}^{\left(  \gamma\right)
}\left(  y_{2}-x\right)  E_{n+a+1}^{\left(  \beta\right)  }\left(
x+y_{1}\right) \\
&  =2\left(  y_{2}+y_{1}-1\right)  E_{m+n+1}^{\left(  \gamma+\beta-1\right)
}\left(  y_{2}+y_{1}-1\right)  -2E_{m+n+2}^{\left(  \gamma+\beta-1\right)
}\left(  y_{2}+y_{1}-1\right)  .
\end{align*}

$\bullet$ Set $\beta=\gamma=1,$\textbf{\ }$b_{1}=2$ and\textbf{\ }$b_{2}=-1$
in \textbf{(}\ref{45}\textbf{). }In view of \cite[Theorem 6]{cz}, (\ref{45})
becomes%
\begin{align*}
&  \sum\limits_{a=0}^{n}\binom{m+n+1}{n-a}2^{m+1+a}E_{n-a}\left(
2x+y_{1}\right)  E_{m+a+1}\left(  -x+y_{2}\right) \\
&  +\sum\limits_{a=0}^{m}\binom{m+n+1}{m-a}2^{m-a}E_{m-a}\left(
-x+y_{2}\right)  E_{n+a+1}\left(  2x+y_{1}\right) \\
&  =\sum\limits_{a=0}^{m+n+1}\binom{m+n+1}{a}2^{a}E_{a}\left(  y_{2}\right)
E_{m+n+1-a}\left(  y_{1}\right) \\
&  =E_{m+n+1}(2y_{2}+y_{1})+2^{m+n+1}E_{m+n+1}\left(  \frac{2y_{2}+y_{1}}%
{2}\right) \\
&  \quad-2^{m+n+1}E_{m+n+1}\left(  \frac{2y_{2}+y_{1}+1}{2}\right)  .
\end{align*}

$\bullet$ Let $\gamma=\beta=1$ and $y_{1}=y_{2}=0$ in (\ref{45}). Then,
\begin{align}
&  \sum\limits_{a=0}^{n}\left(  -1\right)  ^{a}\binom{m+n+1}{n-a}b_{1}%
^{a}b_{2}^{-a-1}E_{n-a}\left(  b_{1}x\right)  E_{m+a+1}\left(  b_{2}x\right)
\nonumber\\
&  -\sum\limits_{a=0}^{m}\left(  -1\right)  ^{a}\binom{m+n+1}{m-a}b_{2}%
^{a}b_{1}^{-a-1}E_{m-a}\left(  b_{2}x\right)  E_{n+a+1}\left(  b_{1}x\right)
\nonumber\\
&  =\frac{1}{b_{1}^{m+1}b_{2}^{n+1}}\sum\limits_{a=0}^{m+n+1}\left(
-1\right)  ^{m+1-a}\binom{m+n+1}{a}b_{1}^{a}b_{2}^{m+n+1-a}E_{m+n+1-a}\left(
0\right)  E_{a}\left(  0\right)  . \label{47a}%
\end{align}
From the property $B_{2n+1}(0)=0$, $n\geq1$ and (\ref{49}) for $x=0,$%
\textbf{\ }we have\textbf{\ }$\left(  -1\right)  ^{a}E_{a}(0)=-E_{a}(0)$ for
$a>0$\textbf{. }Then\textbf{, }the right-hand side of (\ref{47a}) can be
written%
\begin{align}
T  &  =\frac{\left(  -1\right)  ^{m}}{b_{1}^{m+1}b_{2}^{n+1}}\sum
\limits_{a=0}^{m+n+1}\binom{m+n+1}{a}b_{1}^{a}b_{2}^{m+n+1-a}\nonumber\\
&  \quad\times E_{m+n+1-a}\left(  0\right)  E_{a}\left(  0\right)
-2\frac{\left(  -b_{2}\right)  ^{m}}{b_{1}^{m+1}}E_{m+n+1}\left(  0\right)  .
\label{47b}%
\end{align}

\begin{remark}
Kim and Son \cite{mj} proved the reciprocity formula for generalized Dedekind
sums $T_{r}(c,d)$ as%
\begin{equation}
cd^{r}T_{r}(c,d)+dc^{r}T_{r}(d,c)=-\frac{1}{2}%
{\displaystyle\sum\limits_{a=0}^{r}}
\binom{r}{a}d^{a-1}c^{r-1-a}\overline{E}_{a}(0)\overline{E}_{r-a}%
(0)+\overline{E}_{r+1}(0), \label{47c}%
\end{equation}
where $T_{r}(d,c)$ is defined by%
\[
T_{r}(c,d)=\sum\limits_{j=0}^{\left\vert d\right\vert -1}\left(  -1\right)
^{j}\overline{E}_{1}\left(  \frac{j}{d}\right)  \overline{E}_{r}\left(
\frac{cj}{d}\right)
\]
in which%
\begin{align*}
\overline{E}_{r}(x)  &  =E_{r}(x),\text{ }0\leq x<1,\\
\overline{E}_{r}(x+p)  &  =\left(  -1\right)  ^{p}\overline{E}_{r}(x),\text{
}p\in\mathbb{Z}\text{.}%
\end{align*}
It is seen from (\ref{47b}) and (\ref{47c}) that the reciprocity formula of
the generalized Dedekind sum $T_{r}(c,d)$ can be written in terms of the
reciprocity relation of Euler polynomials.
\end{remark}

Now, let us give the Laplace transform of $\overline{E}_{n}\left(  tu\right)
$ by applying (\ref{40}).

\begin{example}
Let $Re\left(  s\right)  >0$ and $|s/t|<\pi.$ Setting $f(u)=e^{-su}$ and
$\overline{E}_{n}\left(  tu\right)  $ instead of $E_{n_{r}}^{\left(
\alpha_{r}\right)  }\left(  u\right)  $ in\textbf{\ }(\ref{40}) gives%
\begin{align}
\frac{1}{n!}\int\limits_{0}^{x}e^{-su}\overline{E}_{n}\left(  tu\right)  du
&  =\sum\limits_{a=0}^{\mu}\frac{s^{a}t^{-a-1}}{\left(  n+a+1\right)
!}\left\{  e^{-sx}\overline{E}_{n+a+1}(tx)-\overline{E}_{n+a+1}(0)\right\}
\nonumber\\
&  +\left(  \frac{s}{t}\right)  ^{\mu+1}\frac{1}{\left(  n+\mu+1\right)
!}\int\limits_{0}^{x}e^{-su}\overline{E}_{n+\mu+1}\left(  tu\right)  du.
\label{37}%
\end{align}
Since the function $\overline{E}_{m}\left(  u\right)  =\left(  -1\right)
^{\left[  u\right]  }E\left(  u-\left[  u\right]  \right)  $ is bounded, the
integrals in (\ref{37}) converge absolutely and $e^{-sx}\overline{E}%
_{n+a+1}(tx)$ tends to $0$ as $x\rightarrow\infty.$ Then, letting
$x\rightarrow\infty,$ we have%
\begin{align}
\frac{1}{n!}\int\limits_{0}^{\infty}e^{-su}\overline{E}_{n}\left(  tu\right)
du  &  =-\frac{t^{n}}{s^{n+1}}\sum\limits_{a=0}^{\mu}\frac{E_{n+a+1}%
(0)}{\left(  n+a+1\right)  !}\frac{s^{n+a+1}}{t^{n+a+1}}\nonumber\\
&  +\left(  \frac{s}{t}\right)  ^{\mu+1}\frac{1}{\left(  n+\mu+1\right)
!}\int\limits_{0}^{\infty}e^{-su}\overline{E}_{n+\mu+1}\left(  tu\right)  du.
\label{39}%
\end{align}
From (\ref{0}) the sum in (\ref{39}) converges absolutely for $|s/t|<\pi$ as
$\mu\rightarrow\infty.$ Also the sequence of the functions \textbf{(}%
in\textbf{ }$u$\textbf{)} $s^{\mu}\overline{E}_{\mu}\left(  tu\right)
/\mu!t^{\mu}$ converges uniformly to $0$ for $|s/t|<\pi$. Thus, letting
$\mu\rightarrow\infty$ and using (\ref{0}), we obtain the Laplace transform of
$\overline{E}_{n}\left(  tu\right)  $
\begin{align}
\frac{1}{n!}\int\limits_{0}^{\infty}e^{-su}\overline{E}_{n}\left(  tu\right)
du  &  =-\frac{t^{n}}{s^{n+1}}\sum\limits_{a=0}^{\infty}\frac{\overline
{E}_{n+a+1}(0)}{\left(  n+a+1\right)  !}\frac{s^{n+a+1}}{t^{n+a+1}}\nonumber\\
&  =\frac{t^{n}}{s^{n+1}}\left(  \sum\limits_{a=0}^{n}\frac{E_{a}(0)}{a!}%
\frac{s^{a}}{t^{a}}-\sum\limits_{a=0}^{\infty}\frac{E_{a}(0)}{a!}\frac{s^{a}%
}{t^{a}}\right) \nonumber\\
&  =\frac{1}{s}\sum\limits_{a=0}^{n}\frac{E_{a}(0)}{a!}\left(  \frac{t}%
{s}\right)  ^{n-a}-\frac{t^{n}}{s^{n+1}}\frac{2}{e^{s/t}+1}. \label{16}%
\end{align}
Note that for $t=1,$ (\ref{16}) coincides with \cite[ eq. (64)]{ml}.
Differentiating $m$ times both sides of (\ref{16}) with respect to $s$, we
have%
\[
\frac{\left(  -1\right)  ^{m}}{n!}\int\limits_{0}^{\infty}u^{m}e^{-su}%
\overline{E}_{n}\left(  tu\right)  du=\frac{d^{m}}{ds^{m}}\left(
\sum\limits_{a=0}^{n}\frac{E_{a}(0)}{a!}t^{n-a}s^{a-n-1}-\frac{t^{n}}{s^{n+1}%
}\frac{2}{e^{s/t}+1}\right)  .
\]
\medskip
\end{example}

\subsection{\textbf{\textit{Bernoulli and Euler polynomials}}}

\begin{theorem}
Let $b_{s}$ and $y_{s},$ $1\leq s\leq l+r$ be arbitrary real numbers with
$b_{s}\not =0.$ Let $N=n_{1}!\cdots n_{l}!m_{1}!\cdots m_{r}!$ and
\begin{align*}
J_{n_{1},...,m_{r}}(x;b;y)  &  =J_{n_{1},...,m_{r}}(x;b_{1},...,b_{l+r}%
;y_{1},...,y_{l+r})\\
&  =\frac{1}{N}\int\limits_{0}^{x}\prod\limits_{s=1}^{l}B_{n_{s}}^{(\gamma
_{s})}\left(  b_{s}z+y_{s}\right)  \prod\limits_{i=1}^{r}E_{m_{i}}^{(\beta
_{i})}\left(  b_{l+i}z+y_{l+i}\right)  dz,\\
D_{n_{1},...,m_{r}}(x;b;y)  &  =D_{n_{1},...,m_{r}}\left(  x;b_{1}%
,...,b_{l+r};y_{1},...,y_{l+r}\right) \\
&  =\frac{1}{N}\prod\limits_{s=1}^{l}B_{n_{s}}^{(\gamma_{s})}\left(
b_{s}x+y_{s}\right)  \prod\limits_{i=1}^{r}E_{m_{i}}^{(\beta_{i})}\left(
b_{l+i}x+y_{l+i}\right) \\
&  -\frac{1}{N}\prod\limits_{s=1}^{l}B_{n_{s}}^{(\gamma_{s})}\left(
y_{s}\right)  \prod\limits_{i=1}^{r}E_{m_{i}}^{(\beta_{i})}\left(
y_{l+i}\right)  .
\end{align*}
Then, for $\mu=n_{1}+\cdots+n_{l}+m_{1}+\cdots+m_{r-1},$
\begin{align}
J_{n_{1},...,m_{r}}(x;b;y)  &  =\sum\limits_{a=0}^{\mu}\left(  -1\right)
^{a}\sum\limits_{j_{1}+\cdots+j_{l+r-1}=a}\binom{a}{j_{1},...,j_{l+r-1}%
}\nonumber\\
&  \quad\times b_{1}^{j_{1}}\cdots b_{l+r-1}^{j_{l+r-1}}b_{l+r}^{-a-1}%
D_{n_{1}-j_{1},\ldots,m_{r-1}-j_{l+r-1},m_{r}+a+1}(x;b;y). \label{33}%
\end{align}

\end{theorem}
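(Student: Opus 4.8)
The plan is to mirror exactly the argument used in the proof of Theorem~\ref{th-i}, only now with a product that mixes $l$ higher-order Bernoulli factors and $r-1$ higher-order Euler factors playing the role of $f(z)$, while the last Euler factor $E_{m_r}^{(\beta_r)}(b_{l+r}z+y_{l+r})$ is the one we integrate by parts against. Concretely, set
\[
f(z)=\prod\limits_{s=1}^{l}B_{n_s}^{(\gamma_s)}\!\left(b_s z+y_s\right)\prod\limits_{i=1}^{r-1}E_{m_i}^{(\beta_i)}\!\left(b_{l+i}z+y_{l+i}\right).
\]
Then $\frac{1}{N}\int_0^x f(z)E_{m_r}^{(\beta_r)}(b_{l+r}z+y_{l+r})\,dz$ is the object $J_{n_1,\dots,m_r}(x;b;y)$ up to the factor $m_r!$ already absorbed in $N$.

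First I would integrate by parts once, using the antiderivative identity that follows from (\ref{1}): $\int E_{k}^{(\alpha)}(bz+y)\,dz=\frac{1}{b(k+1)}E_{k+1}^{(\alpha)}(bz+y)$. Iterating $\mu+1$ times gives, exactly as in display~(\ref{40}),
\[
\frac{1}{m_r!}\int_0^x f(z)E_{m_r}^{(\beta_r)}(b_{l+r}z+y_{l+r})\,dz
=\sum_{a=0}^{\mu}\frac{(-1)^a}{b_{l+r}^{a+1}(m_r+a+1)!}\Bigl[f^{(a)}(z)E_{m_r+a+1}^{(\beta_r)}(b_{l+r}z+y_{l+r})\Bigr]_0^x
\]
\[
+\frac{(-1)^{\mu+1}}{b_{l+r}^{\mu+1}(m_r+\mu+1)!}\int_0^x f^{(\mu+1)}(z)E_{m_r+\mu+1}^{(\beta_r)}(b_{l+r}z+y_{l+r})\,dz.
\]
Next I would expand $f^{(a)}$ by the Leibniz rule for products of $l+r-1$ functions quoted in the proof of Theorem~\ref{th-i}; since each factor is of the form $g(b z+y)$, the $j$-th derivative of the $s$-th Bernoulli factor contributes $b_s^{j}\,n_s(n_s-1)\cdots(n_s-j+1)B_{n_s-j}^{(\gamma_s)}(b_s z+y_s)$ by (\ref{1}), and likewise for the Euler factors. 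Dividing by $n_1!\cdots m_{r-1}!$ turns each falling factorial $n_s(n_s-1)\cdots(n_s-j_s+1)/n_s!$ into $1/(n_s-j_s)!$, so the whole expression assembles into the normalized products appearing in $D_{n_1-j_1,\dots,m_{r-1}-j_{l+r-1},m_r+a+1}(x;b;y)$, with the multinomial coefficient $\binom{a}{j_1,\dots,j_{l+r-1}}$ and the monomial $b_1^{j_1}\cdots b_{l+r-1}^{j_{l+r-1}}$ coming out in front. The boundary term $[\,\cdot\,]_0^x$ is precisely what produces the difference of products defining $D$.

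Finally, I would observe that the choice $\mu=n_1+\cdots+n_l+m_1+\cdots+m_{r-1}$ makes $f$ a polynomial of degree exactly $\mu$, so $f^{(\mu+1)}\equiv 0$ and the trailing integral vanishes, leaving only the sum in (\ref{33}). The only point demanding a little care—and the step I expect to be the main obstacle—is the bookkeeping in the Leibniz expansion: one must check that the factorial normalization in the definition of $N$ (and of the $D$-symbols, whose subscripts have been lowered by the $j_s$) exactly cancels the falling-factorial constants produced by repeated differentiation, and that indices $n_s-j_s$ never go negative thanks to the constraint $j_1+\cdots+j_{l+r-1}=a\le\mu$ together with the convention that terms with a negative subscript are absent (equivalently, $\binom{a}{j_1,\dots,j_{l+r-1}}$ or the differentiated polynomial vanishes). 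Everything else is a direct transcription of the $r$-Euler case.
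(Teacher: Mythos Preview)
Your proposal is correct and follows exactly the route the paper itself takes: the paper's proof of this theorem consists solely of the remark that one repeats the argument of Theorem~\ref{th-i}, and you have spelled out precisely that argument---set $f(z)$ to be the product of all factors except the last Euler one, iterate integration by parts against $E_{m_r}^{(\beta_r)}(b_{l+r}z+y_{l+r})$ using (\ref{1}), expand $f^{(a)}$ via the multivariate Leibniz rule, and observe that the remainder integral vanishes when $\mu$ equals the total degree of $f$. Your attention to the factorial bookkeeping and the $b_{l+r}^{-a-1}$ factors is, if anything, more careful than the paper's own exposition.
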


\begin{proof}
The proof can be obtained by using the arguments in the proof of Theorem
(\ref{th-i}).
\end{proof}

In order to obtain the reciprocity relation for sums of products of
higher-order Bernoulli and Euler polynomials, similar to $T,$ we define
\begin{align*}
T_{1}  &  :=\sum\limits_{a=0}^{n}\left(  -1\right)  ^{a}\binom{m+n+1}%
{n-a}b_{1}^{a}b_{2}^{-a-1}B_{n-a}^{(\gamma)}\left(  y_{1}\right)
E_{m+a+1}^{(\beta)}\left(  y_{2}\right) \\
&  -\sum\limits_{a=0}^{m}\left(  -1\right)  ^{a}\binom{m+n+1}{m-a}b_{2}%
^{a}b_{1}^{-a-1}E_{m-a}^{(\gamma)}\left(  y_{2}\right)  B_{n+a+1}^{(\beta
)}\left(  y_{1}\right)  .
\end{align*}
Similarly, we have%
\begin{align}
T_{1}  &  =\sum\limits_{a=0}^{n}\left(  -1\right)  ^{a}\binom{m+n+1}{n-a}%
b_{1}^{a}b_{2}^{-a-1}B_{n-a}^{(\gamma)}\left(  b_{1}x+y_{1}\right)
E_{m+a+1}^{(\beta)}\left(  b_{2}x+y_{2}\right) \nonumber\\
&  \ -\sum\limits_{a=0}^{m}\left(  -1\right)  ^{a}\binom{m+n+1}{m-a}b_{2}%
^{a}b_{1}^{-a-1}E_{m-a}^{(\gamma)}\left(  b_{2}x+y_{2}\right)  B_{n+a+1}%
^{(\beta)}\left(  b_{1}x+y_{1}\right) \nonumber\\
&  =\frac{1}{b_{1}^{m+1}b_{2}^{n+1}}\sum\limits_{a=0}^{m+n+1}\left(
-1\right)  ^{m+1-a}\binom{m+n+1}{a}\nonumber\\
&  \quad\times b_{1}^{a}b_{2}^{m+n+1-a}E_{a}^{\left(  \beta\right)  }\left(
y_{2}\right)  B_{m+n+1-a}^{\left(  \gamma\right)  }\left(  y_{1}\right)  .
\label{30}%
\end{align}
Notice that the right-hand side of (\ref{30}) vanishes for $y_{1}=\gamma/2,$
$y_{2}=\beta/2$ and even $m+n.$

$\bullet$ Setting $\beta=\gamma=1$\textbf{\ }and\textbf{\ }$b_{1}=2,$
$b_{2}=-1$ in \textbf{(}\ref{30}\textbf{), }we get%
\begin{align*}
2^{m+1}T_{1}  &  =-\sum\limits_{a=0}^{m+n+1}\binom{m+n+1}{a}2^{a}E_{a}\left(
y_{2}\right)  B_{m+n+1-a}\left(  y_{1}\right) \\
&  =-B_{m+n+1}(2y_{2}+y_{1})+2^{m+n-1}\left(  m+n+1\right)  E_{m+n}\left(
\frac{2y_{2}+y_{1}+1}{2}\right) \\
&  \quad-2^{m+n-1}\left(  m+n+1\right)  E_{m+n}\left(  \frac{2y_{2}+y_{1}}%
{2}\right)
\end{align*}
by \cite[Theorem 10]{cz}. After similar manipulations to $T$, we have for
$\gamma=\beta$ and $b_{2}=b_{1}=1$%
\[
\left(  -1\right)  ^{n}T_{1}=\sum\limits_{a=0}^{m+n+1}\binom{m+n+1}%
{a}B_{m+n+1-a}^{(\gamma)}\left(  \gamma-y_{1}\right)  E_{a}^{(\gamma)}\left(
y_{2}\right)  .
\]
In view of (\ref{34}) for $x=\gamma-y_{1},$ $y=y_{2},$ we get%
\[
T_{1}=\left(  -1\right)  ^{n}2^{m+n+1}B_{m+n+1}^{(\gamma)}\left(  \frac
{\gamma-y_{1}+y_{2}}{2}\right)  .
\]

$\bullet$ On the other hand, for $y_{1}=y_{2}=0$ and $\gamma=\beta=1$, $T_{1}$
can be written as
\begin{align*}
T_{1}  &  =\sum\limits_{a=0}^{n}\left(  -1\right)  ^{a}\binom{m+n+1}{n-a}%
b_{1}^{a}b_{2}^{-a-1}B_{n-a}\left(  0\right)  E_{m+a+1}\left(  0\right) \\
&  \quad-\sum\limits_{a=0}^{m}\left(  -1\right)  ^{a}\binom{m+n+1}{m-a}%
b_{2}^{a}b_{1}^{-a-1}E_{m-a}\left(  0\right)  B_{n+a+1}\left(  0\right) \\
&  =\frac{(-1)^{m+1}}{b_{1}^{m+1}b_{2}^{n+1}}\sum\limits_{a=0}^{m+n+1}\left(
-1\right)  ^{a}\binom{m+n+1}{a}b_{1}^{a}b_{2}^{m+n+1-a}B_{m+n+1-a}\left(
0\right)  E_{a}\left(  0\right)  .
\end{align*}
Using (\ref{49}) for $x=0,$ we get%
\begin{align*}
T_{1}  &  =\frac{(-1)^{m+1}}{b_{1}^{m+1}b_{2}^{n+1}}\sum\limits_{a=1}%
^{m+n+2}\left(  -1\right)  ^{a-1}\binom{m+n+1}{a-1}\\
&  \times b_{1}^{a-1}b_{2}^{m+n+2-a}B_{m+n+2-a}\frac{2}{a}\left(
1-2^{a}\right)  B_{a}.
\end{align*}
Therefore, we have
\begin{align}
&  \sum\limits_{a=0}^{n}\left(  -1\right)  ^{a}\binom{m+n+1}{n-a}b_{1}%
^{a}b_{2}^{-a-1}B_{n-a}E_{m+a+1}\left(  0\right) \nonumber\\
&  -\sum\limits_{a=0}^{m}\left(  -1\right)  ^{a}\binom{m+n+1}{m-a}b_{2}%
^{a}b_{1}^{-a-1}E_{m-a}\left(  0\right)  B_{n+a+1}\nonumber\\
&  =\frac{(-1)^{m}}{b_{1}^{m+2}b_{2}^{n+1}}\frac{2}{m+n+2}\sum\limits_{a=1}%
^{m+n+2}\left(  -1\right)  ^{a}\binom{m+n+2}{a}\nonumber\\
&  \quad\times b_{1}^{a}b_{2}^{m+n+2-a}\left(  1-2^{a}\right)  B_{m+n+2-a}%
B_{a}. \label{50}%
\end{align}

\begin{remark}
Observe that the sum on the right-hand side of (\ref{50}) is the reciprocity
formula for the Hardy--Berndt sums $s_{3,r}(c,d)$ and $s_{4,r}(c,d)$ given by
\cite{mc}
\begin{align}
&  \left(  r+1\right)  \left(  cd^{r}s_{3,r}\left(  c,d\right)  -2^{-2}%
d\left(  2c\right)  ^{r}s_{4,r}\left(  d,c\right)  \right) \nonumber\\
&  \ =2\sum\limits_{a=1}^{r+1}\binom{r+1}{a}\left(  -1\right)  ^{a}%
c^{a}d^{r+1-a}\left(  1-2^{a}\right)  B_{a}B_{r+1-a}, \label{51}%
\end{align}
where $d$ and $r$ are odd and
\[
s_{3,r}(c,d)=\sum\limits_{j=1}^{d-1}\left(  -1\right)  ^{j}\overline{B}%
_{r}\left(  \frac{cj}{d}\right)  ,\text{ }s_{4,r}(c,d)=-4\sum\limits_{j=1}%
^{d-1}\overline{B}_{r}\left(  \frac{cj}{2d}\right)  .\text{\ }%
\]
Thus, the reciprocity formulas given by (\ref{50}) and (\ref{51}) can be
associated as
\begin{align*}
&  \sum\limits_{a=0}^{n}\left(  -1\right)  ^{m-a}\binom{m+n+1}{n-a}%
b_{1}^{m+2+a}b_{2}^{n-a}B_{n-a}E_{m+a+1}\left(  0\right) \\
&  \quad-\sum\limits_{a=0}^{m}\left(  -1\right)  ^{m-a}\binom{m+n+1}{m-a}%
b_{2}^{n+1+a}b_{1}^{m+1-a}E_{m-a}\left(  0\right)  B_{n+a+1}\\
&  \ =b_{1}b_{2}^{r}s_{3,r}(b_{1},b_{2})-2^{-2}b_{2}(2b_{1})^{r}s_{4,r}%
(b_{2},b_{1})\\
&  \ =\frac{2}{r+1}\sum\limits_{a=1}^{r+1}\left(  -1\right)  ^{a}\binom
{r+1}{a}b_{1}^{a}b_{2}^{r+1-a}\left(  1-2^{a}\right)  B_{r+1-a}B_{a}
\end{align*}
for odd integers $r=\left(  m+n+1\right)  $ and $b_{2}.$
\end{remark}

From this relationship, (\ref{50}) can be evaluated for some special cases.
Since $s_{3,r}(d,1)=0$ and $s_{4,r}(d,1)=0,$ we have%
\begin{align*}
&  \sum\limits_{a=0}^{n}\left(  -1\right)  ^{a}\binom{m+n+1}{n-a}%
b^{-a-1}B_{n-a}E_{m+a+1}\left(  0\right) \\
&  \quad-\sum\limits_{a=0}^{m}\left(  -1\right)  ^{a}\binom{m+n+1}{m-a}%
b^{a}E_{m-a}\left(  0\right)  B_{n+a+1}\\
&  =(-1)^{m}b^{m}s_{3,m+n+1}(1,b)
\end{align*}
for odd integers $\left(  m+n+1\right)  $ and $b,$ and
\begin{align*}
&  \sum\limits_{a=0}^{n}\left(  -1\right)  ^{a}\binom{m+n+1}{n-a}b^{a}%
B_{n-a}E_{m+a+1}\left(  0\right) \\
&  \quad-\sum\limits_{a=0}^{m}\left(  -1\right)  ^{a}\binom{m+n+1}%
{m-a}b^{-a-1}E_{m-a}\left(  0\right)  B_{n+a+1}\\
&  =(-1)^{m+1}2^{m+n-1}b^{n-1}s_{4,m+n+1}(1,b)
\end{align*}
for odd integer $\left(  m+n+1\right)  .$

\end{document}